\newtheorem{theorem}{Theorem}[section]
\newtheorem{proposition}[theorem]{Proposition}
\def\eps{{\varepsilon}}
\begin{document}

\title{On the spectral characterization\\ of mixed extensions of $P_{3}$}
\author{Willem H. Haemers\\[-3pt]
{\small Dept. of Econometrics and O.R.,}\\[-3pt]
{\small Tilburg University, Tilburg, The Netherlands}\\[-3pt]
{\small \tt{haemers@uvt.nl}}
\\[7pt]
Sezer Sorgun \qquad Hatice Topcu\\[-3pt]
{\small Dept. of Mathematics,}\\[-3pt]
{\small Nev\c{s}ehir Hac{\i} Bekta\c{s} Veli University,}
{\small Nev\c{s}ehir, Turkey}\\[-3pt]
{\small \tt{srgnrzs,haticekamittopcu@gmail.com}}
}
\date{}

\maketitle

\begin{abstract}
\noindent
A mixed extension of a graph $G$ is a graph $H$ obtained from $G$ by replacing each vertex of $G$ by a clique or a coclique, whilst
two vertices in $H$ corresponding to distinct vertices $x$ and $y$ of $G$ are adjacent whenever $x$ and $y$ are adjacent in $G$.
If $G$ is the path $P_3$, then $H$ has at most three adjacency eigenvalues unequal to $0$ and $-1$.
Recently, the first author classified the graphs with the mentioned eigenvalue property.
Using this classification we investigate mixed extension of $P_3$ on being determined by the adjacency spectrum.
We present several cospectral families, and with the help of a computer we find all graphs on at most $25$ vertices that
are cospectral with a mixed extension of $P_3$.
\end{abstract}

\section{Introduction}

Characterizations of graphs by means of the spectrum of the adjacency matrix is a well-studied subject.
Although it is conjectured that almost all graphs are determined by the spectrum of the adjacency matrix,
there are still relatively few graphs known to be determined by its spectrum.
The reason is that in general this property is hard to prove.
Some of these proofs are based on the classification of graphs with certain spectral properties,
such as classifications in terms of the smallest eigenvalue.
Recently the first author classified the graphs with all but at most three eigenvalues equal to $0$ or $-1$; see~\cite{H}.
Here this classification is applied to mixed extensions of the path $P_3$ (see next section), which have the mentioned spectral property.
We give the characteristic polynomial of all graphs in the classification,
and completely determine the mixed extensions of $P_3$ which are determined by the spectrum on at most 25 vertices.
Also we present several infinite families of cospectral graphs with all but three eigenvalues equal to $0$ or $-1$.

\section{Mixed extensions of $P_3$}

Let $G$ be a graph with vertex set $V(G)=\{1,\ldots,m\}$ and let $V_{1}, \ldots, V_{m}$ be mutually disjoint nonempty finite sets.
A graph $H$ with vertex set $V(H)=V_{1}{\cup}\ldots{\cup}V_{m}$ is defined as follows.
For each $i\in\{1, \ldots, m\}$, all vertices of $V_{i}$ are either mutually adjacent (form a clique),
or mutually nonadjacent (form a coclique).
For any $u\in V_{i}$ and $v\in V_{j}$ ($i\neq j$) %($i,j\in\{1,\ldots,m\}$),
$\{u,v\}$ in an edge in $H$ if and only if $\{i,j\}$ is an edge in $G$.
The graph $H$ is called a {\em mixed extension} of $G$.
A mixed extension is represented by an $m$-tuple $(t_{1}, \ldots, t_{m})$ of nonzero integers,
where $t_{i}>0$ indicates that $V_{i}$ is a clique of order $t_{i}$ and $t_{i}<0$ means that $V_{i}$ is a coclique
of order $-t_{i}$.
%We refer to \cite{H}, \cite{CFMR} or \cite{S} for basic results on mixed extensions, and to \cite{BH} or \cite{CRS} for graph spectra.
A mixed extension of $G$ is a special case of a {\em generalized composition} or {\em $G$-join}, introduced in
\cite{CFMR} and \cite{S}, respectively.
We refer to \cite{H}, \cite{CFMR} and \cite{S} for basic results on mixed extensions,
and to \cite{BH} or \cite{CRS} for graph spectra.

Suppose $H$ is a mixed extension of the path $P_3$ of type $(t_1,t_2,t_3)$.
Then the adjacency matrix of $H$ admits the following structure.
(As usual, $J$ is an all-ones matrix, $J_n$ is the $n\times n$ all-ones matrix, and $I_n$ is the identity matrix of order $n$.)
\[
A=\left[
\begin{array}{ccc}
\eps_1 (J_{|t_1|}-I_{|t_1|}) & J & O\\
J & \eps_2(J_{|t_2|}-I_{|t_2|}) & J\\
O & J & \eps_3(J_{|t_3|}-I_{|t_3|})
\end{array}
\right],
\]
where $\eps_i=1$ if $t_i>1$ and $\eps_i=0$ otherwise ($i=1,2,3$).

The given partition of $A$ is equitable, therefore $A$ has two kinds of eigenvalues:
the ones that have eigenvectors in the span $V$ of the characteristic vectors of the partition,
and those whose eigenvectors are orthogonal to $V$.
The first kind coincide with the eigenvalues of the quotient matrix
 \[
Q=\left[
\begin{array}{ccc}
\eps_1 (|t_1|-1) & |t_2| & 0\\
|t_1| & \eps_2(|t_2|-1) & |t_3|\\
0 & |t_2| & \eps_3(|t_3|-1)
\end{array}
\right].
\]
The second kind of eigenvalues of $H$ remain eigenvalues if we subtract an all-one block from each nonzero block of $A$.
So these eigenvalues are also eigenvalues of
\[
B=\left[
\begin{array}{ccc}
-\eps_1 I_{|t_1|} & O & O\\
O & -\eps_2 I_{|t_2|} & O\\
O & O & -\eps_3 I_{|t_3|}
\end{array}
\right],
\]
which are clearly all equal to $0$ or $-1$.
So, if $n=|t_1|+|t_2|+|t_3|$ is the order of $H$, and $q(x)=x^3-bx^2-cx+d$ is the characteristic polynomial of $Q$,
then the characteristic polynomial of $A$ equals $p(x)=q(x)(x+1)^b x^{n-b-3}$.
Indeed, the exponent of $(x+1)$ equals $b$, because the coefficient of $x^{n-1}$ in $p(x)$ equals trace$(A)=0$.
Note that $d=-\det(Q)\geq 0$, and $\det(Q)=0$ if and only if $\eps_1=\eps_3=0$
that is, both end vertices of $P_3$ are replaced by a coclique.
If this is the case then we call it an {\em improper} mixed extension of $P_3$,
because it is in fact a mixed extension of $P_2$.
If $\det(Q)\neq 0$, then the mixed extension is {\em proper}.
\begin{proposition}\label{mep3}
A proper mixed extension of the path $P_3$ has exactly two positive eigenvalues and one eigenvalue smaller than $-1$.
\end{proposition}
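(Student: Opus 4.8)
The plan is to reduce everything to the three roots of the cubic $q(x)=x^3-bx^2-cx+d$. By the preceding discussion the remaining $n-3$ eigenvalues of $A$ are all equal to $0$ or $-1$, and none of these is positive or smaller than $-1$. Hence the proposition is equivalent to the assertion that, among the eigenvalues of the quotient matrix $Q$ (equivalently, the roots of $q$), exactly two are positive and exactly one is less than $-1$. These three numbers are real, being eigenvalues of the symmetric matrix $A$.

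First I would fix the signs using the elementary symmetric functions of the roots. The sum of the roots is $b=\operatorname{tr}(Q)=\sum_i \eps_i(|t_i|-1)\ge 0$, and their product is $\det(Q)=-d$. For a proper extension $\det(Q)\neq 0$, and since $d\ge 0$ this forces $\det(Q)<0$; in particular no root vanishes and the number of negative roots is odd. If all three roots were negative their sum would be negative, contradicting $b\ge 0$, so exactly one root is negative and the other two are positive. This already accounts for the two positive eigenvalues.

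It remains to locate the unique negative root $\lambda$ strictly below $-1$. Writing the two positive roots as $\mu_1,\mu_2$, one has $q(-1)=(-1-\lambda)(-1-\mu_1)(-1-\mu_2)$, and since the last two factors are negative their product is positive; thus $q(-1)>0$ is equivalent to $\lambda<-1$. So the goal reduces to proving $q(-1)>0$, and here I would use $q(-1)=\det(-I-Q)=-\det(I+Q)$, turning the claim into $\det(I+Q)<0$.

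The main work, and the only genuine obstacle, is this determinant sign. Writing $d_i=1+\eps_i(|t_i|-1)$ for the diagonal entries of $I+Q$, a direct expansion gives $\det(I+Q)=d_1d_2d_3-|t_2|\,(d_1|t_3|+|t_1|d_3)$. The key elementary observation is that $1\le d_i\le |t_i|$ for every $i$, since $d_i=|t_i|$ when $\eps_i=1$ and $d_i=1$ otherwise. Bounding the first term by $d_2\le|t_2|$ gives $d_1d_2d_3\le |t_2|d_1d_3$, while $|t_3|\ge d_3$ and $|t_1|d_3>0$ give $|t_2|(d_1|t_3|+|t_1|d_3)>|t_2|d_1d_3$; subtracting yields $\det(I+Q)<0$. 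I expect the case checking hidden in $1\le d_i\le|t_i|$ to be entirely routine once that bound is isolated, so the real crux is recognizing the identity $q(-1)=-\det(I+Q)$ and that $d_i\le|t_i|$ renders the sign transparent.
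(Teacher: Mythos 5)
Your proof is correct, but its second half takes a genuinely different route from the paper. For the count of positive eigenvalues the two arguments essentially coincide: both use that $\det(Q)=-d<0$ for a proper extension together with $\operatorname{tr}(Q)=b\geq 0$ to force exactly one negative and two positive roots of $q$ (and both lean on the paper's preceding remark that $d\geq 0$, with equality only in the improper case). The divergence is in locating the negative root below $-1$. The paper invokes the structural theorem that a graph whose smallest adjacency eigenvalue is at least $-1$ must be a disjoint union of complete graphs; since a proper mixed extension of $P_3$ is connected but not complete (a vertex of $V_1$ and a vertex of $V_3$ are nonadjacent), its least eigenvalue is therefore below $-1$. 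You instead prove the purely algebraic inequality $q(-1)=-\det(I+Q)>0$: your expansion $\det(I+Q)=d_1d_2d_3-|t_2|\,(d_1|t_3|+|t_1|d_3)$ is correct, the bounds $1\leq d_i\leq |t_i|$ hold in both cases $\varepsilon_i=0$ and $\varepsilon_i=1$, and the resulting sign, combined with the fact that exactly one root is negative, pins that root strictly below $-1$. What the paper's route buys is brevity, at the cost of quoting a classical characterization external to the computation; what yours buys is a self-contained, elementary argument that never leaves the quotient matrix, and it even shows $\det(I+Q)<0$ without properness, so that properness is needed only for the sign of $\det(Q)$ in the first half.
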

\begin{proof}
The quotient matrix $Q$ has at least one positive eigenvalue, and $\det(Q)<0$ gives that $Q$ (and $A$) has exactly two positive
eigenvalues.
It is well-known that a graph with smallest adjacency eigenvalue at least $-1$ is the disjoint union of complete graphs.
Therefore the smallest eigenvalue of $A$ (and $Q$) is less than $-1$.
\end{proof}

\section{Spectral characterizations}

Some known results on spectral characterizations of graphs deal with special cases of mixed extensions of $P_3$.
This includes the pineapple graphs (type $(p,1,-q)$, $p,q>0$),
and the complete graphs from which the edges of a complete bipartite subgraph are deleted (type $(p,q,r)$, $p,q,r > 0$).
The latter graphs are determined by their spectrum for all $p,q,r>0$; see~\cite{CH}.
For the pineapple graphs it is known for which $p$ and $q$ the graphs are determined by its spectrum.
For the precise conditions on $p$ and $q$ we refer to \cite{TSH'}.
It follows that among the connected graphs the pineapple graph is determined by its spectrum.
\\
An improper mixed extension of $P_3$ is either a complete bipartite graph $K_{p,q}$ ($p\geq 2, q\geq 1$),
or a complete split graph $CS_{p,q}$ ($p,q\geq 2$),
which is a complete graph $K_{p+q}$ from which the edges of a complete subgraph $K_q$ are deleted.
For these classes of graphs the spectral characterization is straightforward and known.
The complete split graph $CS_{p,q}$ is determined by its spectrum for all $p,q\geq 2$.
The complete bipartite graph $K_{p,q}$ is not determined by its adjacency spectrum if and only if $pq$
has a divisor $r$ strictly between $p$ and $q$.
Then $K_{r,pq/r}$ extended with $p+q-r-pq/r$ isolated vertices is cospectral with $K_{p,q}$.
From now on we restrict to proper mixed extensions of $P_3$.

 \begin{table}
 \begin{tabular}{|c|c|c|c|}
  \hline
  % after \\: \hline or \cline{col1-col2} \cline{col3-col4} ...
    & b & c & d\\\hline
    $(-p,-q,r)$& $r-1$& $pq+qr$ & $pq(r-1)$ \\\hline
    $(-p,q,r)$& $q+r-2$& $q+r+pq-1$ & $pq(r-1)$ \\\hline
    $(p,-q,r)$& $p+r-2$& $q(p+r)+(p-1)(1-r)$ & $qr(p-1)+pq(r-1)$ \\\hline
    $(p,q,r)$& $p+q+r-3$& $2q+2r+2p-pr-3$ & $qpr+pr-p-q-r+1$ \\\hline
    $(-2,q,r,-2)$& $q+r-1$& $2q+2r$ & $4qr$ \\\hline
     $(-3,q,-2,s)$& $q+s-1$& $2s+5q-qs$ & $6qs$ \\\hline
    $(-2,-2,-3,s)$& $s$& $2s+10$ & $12s$ \\\hline
    $(5,2,-r,4)$& $8$& $6r-9$ & $34r+18$ \\\hline
    $(4,2,-r,6)$& $9$& $8r-15$ & $40r+25$ \\\hline
    $(7,2,-r,3)$& $9$& $5r-6$ & $37r+16$ \\\hline
    $(3,3,-r,6)$& $9$& $9r-15$ & $45r+25$ \\\hline
    $(4,3,-r,3)$& $7$& $6r-4$ & $30r+12$ \\\hline
    $(7,3,-r,2)$& $9$& $5r+1$ & $37r+9$ \\\hline
    $(3,4,-r,4)$& $8$& $8r-9$ & $40r+18$ \\\hline
    $(3,6,-r,3)$& $9$& $9r-6$ & $45r+16$ \\\hline
    $(4,6,-r,2)$& $9$& $8r+1$ & $40r+9$ \\\hline
    $(5,4,-r,2)$& $8$& $6r+1$ & $34r+8$ \\\hline
    $(2,2,2,7)$& $9$& $1$ & $65$ \\\hline
    $(2,2,6,3)$& $9$& $9$ & $73$ \\\hline
    $(2,2,3,4)$& $7$& $5$ & $51$ \\\hline
    $(2,3,2,5)$& $8$& $1$ & $68$ \\\hline
    $(2,3,4,3)$& $8$& $7$ & $74$ \\\hline
    $(2,5,2,4)$& $9$& $1$ & $89$ \\\hline
    $(3,2,2,3)$& $6$& $3$ & $40$ \\\hline
    $(2,5,3,3)$& $9$& $6$ & $94$ \\\hline
    $(1,p,-q,r,1)$& $p+r-1$& $(q+1)(p+r)-pr$ & $pr(2q+1)$ \\\hline
    $K_{p}+K_{q,r}$& $p-1$& $qr$ & $qr(p-1)$ \\\hline
    $K_{p}+CS_{q,r}$& $p+q-2$& $qr-(p-1)(q-1)$ & $qr(p-1)$ \\\hline
\end{tabular}
\caption{Coefficients $b,c,d$ of the characteristic polynomials of graphs in $\mathcal G''$}\label{bcd}
\end{table}

%All these graphs have the property that all but at most three eigenvalues of the adjacency matrix are equal to $0$ or $-1$.
We define $\mathcal{G}$ to be the set of graphs with all but at most three adjacency eigenvalues equal to $-1$ or $0$.
%Recently the first author classified all graphs in $\mathcal G$; see~\cite{H}.
Note that a graph $G$ in $\mathcal{G}$ remains in $\mathcal{G}$ if isolated vertices are added or deleted.
Therefore, for the classification, we may restrict to the set of graphs in $\mathcal{G}$ with no isolated vertices.
By $\mathcal{G}''$ we denote the class of graphs in $\mathcal{G}$ with no isolated vertices, with two positive
eigenvalues and with one eigenvalue less than $-1$.
By Proposition~\ref{mep3} we know that a proper mixed extension of $P_3$ is in $\mathcal{G''}$.
The characterization of graphs in $\mathcal G$, mentioned in the introduction, when restricted to $\mathcal G''$,
is given by the following two theorems.

\begin{theorem}
A disconnected graph $H$ belongs to $\mathcal{G''}$, if and only if $H$ is one of the following.
\\
(i) $K_{p}+K_{q,r}$ with $p,q,r \geq 2$,
\\
(ii) $K_{p}+CS_{q,r}$ with $p,q\geq 2$, $r\geq 1$.
\end{theorem}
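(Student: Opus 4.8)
The plan is to prove both implications, with the reverse direction (a disconnected $H\in\mathcal{G}''$ must be of type (i) or (ii)) carrying the real content. For the forward direction I would simply exhibit the spectra. Since $K_{q,r}$ has spectrum $\{\sqrt{qr},\,0^{(q+r-2)},\,-\sqrt{qr}\}$ and $K_p$ has spectrum $\{p-1,\,(-1)^{(p-1)}\}$, the disjoint union $K_p+K_{q,r}$ has exactly the two positive eigenvalues $p-1,\sqrt{qr}$, the single eigenvalue $-\sqrt{qr}<-1$ (here $q,r\ge 2$ forces $qr\ge 4$), and all remaining eigenvalues in $\{0,-1\}$; it has no isolated vertex and is disconnected, so it lies in $\mathcal{G}''$. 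The same computation for $K_p+CS_{q,r}$, using the quotient matrix of the clique/coclique equitable partition of $CS_{q,r}$ (whose negative root drops below $-1$ exactly when the coclique has size at least $2$), establishes membership; alternatively one reads $b,c,d$ off Table~\ref{bcd} and checks that $q(x)(x+1)^b x^{n-b-3}$ has the required sign pattern. The boundary $r=1$ in (ii), where $CS_{q,1}=K_{q+1}$ and the union degenerates to two cliques, is exactly where the eigenvalue below $-1$ disappears, and this is the first place the argument must be treated with care.

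For the reverse direction, let $H\in\mathcal{G}''$ be disconnected. I would first bound the number of components: $H$ has no isolated vertex, so each component contains an edge and hence has positive spectral radius, contributing at least one positive eigenvalue. Since $H$ has exactly two positive eigenvalues, it has exactly two components $G_1,G_2$, each with exactly one positive eigenvalue. The key structural input is the classical theorem of Smith that a connected graph with exactly one positive eigenvalue is complete multipartite; hence both $G_1$ and $G_2$ are complete multipartite.

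It then remains to use the condition that $H$ has exactly one eigenvalue less than $-1$, all other eigenvalues lying in $\{0,-1\}$. As in the proof of Proposition~\ref{mep3}, a connected complete multipartite graph has smallest eigenvalue at least $-1$ if and only if it is a single clique, so any non-clique component already supplies an eigenvalue below $-1$. Consequently at most one of $G_1,G_2$ may be non-complete, forcing say $G_1=K_p$ and reducing matters to deciding which complete multipartite graphs $G_2$ have exactly one eigenvalue below $-1$. Here I would argue by eigenvalue interlacing: if $G_2$ had two parts of size $\ge 2$ together with a further nonempty part, it would contain an induced $K_{1,2,2}$, whose two smallest eigenvalues $-2$ and $1-\sqrt{5}$ are both below $-1$; interlacing then forces the two smallest eigenvalues of $G_2$ below $-1$, a contradiction. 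Thus either $G_2$ has at most one part of size $\ge 2$ (a complete split graph, type (ii)) or $G_2$ has exactly two parts and nothing else (a complete bipartite graph $K_{q,r}$, type (i)); excluding the clique and $K_{1,1}$ degeneracies, which lose the eigenvalue below $-1$, leaves precisely the stated families.

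The hard part is this last step: pinning down the exact count of eigenvalues below $-1$ of a complete multipartite graph and correctly handling the boundary cases — notably the star $K_{1,r}=CS_{1,r}$ shared between the two families, and the degenerate two-clique graph $K_p+K_{q+1}$. Verifying that $K_{1,2,2}$ is the right forbidden induced subgraph, and that interlacing genuinely transfers its two sub-$(-1)$ eigenvalues to $G_2$, is the crux; once that is in place, the rest is bookkeeping on the quotient matrices of the two families.
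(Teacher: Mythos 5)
Your proposal cannot be checked against a proof in the paper, because the paper contains none: this theorem is imported without proof from the classification in \cite{H}. Judged on its own, your skeleton is correct and gives a clean, self-contained route. No isolated vertices plus exactly two positive eigenvalues does force exactly two components, each with exactly one positive eigenvalue; Smith's theorem makes both components complete multipartite; since a connected graph with smallest eigenvalue at least $-1$ is complete, at most one component is non-complete, and (to retain the eigenvalue below $-1$) exactly one is, say $G_2$; and your interlacing step is sound, since $K_{1,2,2}$ indeed has spectrum $\{1+\sqrt{5},\,0,\,0,\,-2,\,1-\sqrt{5}\}$ with two eigenvalues below $-1$, so $G_2$ cannot have two parts of size at least $2$ together with a third nonempty part. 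All of these claims check out.

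The genuine gap is the step you dismiss as bookkeeping, and it is not cosmetic. Carrying your computation to the end yields: $G_2$ is either $K_{q,r}$ with $q,r\geq 2$, or a complete split graph whose \emph{coclique} part has size at least $2$ and whose \emph{clique} part may have any size $\geq 1$ (so stars $K_{1,r}$, $r\geq 2$, are included). That does not match clause (ii) under the paper's own convention, in which $CS_{q,r}$ has clique part $q$ and coclique part $r$ (this is the convention of Table~\ref{bcd}: the row $K_p+CS_{q,r}$ has $b=p+q-2$, i.e.\ the multiplicity of $-1$ is $p+q-2$, which forces the clique part to be $q$). Read that way, clause (ii) with $r=1$ is $K_p+K_{q+1}$, whose smallest eigenvalue is $-1$; it has no eigenvalue below $-1$ and hence is \emph{not} in $\mathcal{G}''$. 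Conversely, $K_p+K_{1,r}$ with $p,r\geq 2$, which your derivation correctly produces, does lie in $\mathcal{G}''$ but satisfies neither (i) (which needs $q,r\geq 2$) nor (ii) (which needs clique part $\geq 2$). The theorem is true only if $CS_{q,r}$ in (ii) is read with $q$ the coclique and $r$ the clique, opposite to the definition in Section~3. You flagged precisely these two boundary cases (the star and the two-clique union) but left them unresolved; until that last step is carried out and the convention fixed, your argument stops short of the stated parameter ranges, and completing it honestly shows those ranges, as printed, need correction.
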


\begin{theorem}
A connected graph $H$ belongs to $\mathcal{G''}$ if and only if $H$ is one of the following.
\\
(i) A mixed extension of $P_{3}$ of type $(-p,-q,r)$; $(-p,q,r)$; $(p,-q,r)$, or $(p,q,r)$
with $p,q\geq 1$ and $r\geq 2$,
\\
(ii) a mixed extension of $P_4$ of type $(-2,q,r,-2)$; $(-3,q,-2,s)$, or $(-2,-2,-3,s)$
with $q,r,s\geq 1$,
\\
(iii) a mixed extension of $P_4$ of type $(p,q,-r,s)$ with $r\geq 1$ and $(p,q,s)\in
\big\{(5,2,4), \allowbreak (4,2,6),(7,2,3),(3,3,6),(4,3,3),(7,3,2),(3,4,4),(3,6,3),(4,6,2),(5,4,2)\big\}$,
\\
(iv) a mixed extension of $P_4$ of type $(p,q,r,s)$, with $(p,q,r,s)\in
\big\{(2,2,2,7), (2,2,6,3),\allowbreak(2,2,3,4), (2,3,2,5), (2,3,4,3), (2,5,2,4),(2,5,3,3),(3,2,2,3)\big\}$,
\\
(v) a mixed extension of $P_5$ of type $(1,p,-q,r,1)$ with $p,q,r \geq 1$.
\end{theorem}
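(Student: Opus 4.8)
The plan is to obtain this theorem, together with the disconnected case, as the specialization to $\mathcal{G}''$ of the classification of the whole class $\mathcal{G}$ established in~\cite{H}. The mechanism of Section~2 is the key tool: for a mixed extension $H$ of $P_m$ the natural clique/coclique partition is equitable, so the spectrum of $H$ consists of the eigenvalues of the $m\times m$ quotient matrix $Q$ together with copies of $0$ and $-1$ produced by the block matrix $B$. Consequently $H\in\mathcal{G}$ exactly when at most three eigenvalues of $Q$ fall outside $\{0,-1\}$, and $H\in\mathcal{G}''$ exactly when $Q$ has two positive eigenvalues, one eigenvalue below $-1$, and its remaining $m-3$ eigenvalues equal to $0$ or $-1$. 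This reduces the whole problem to controlling the small matrix $Q$.

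For the ``if'' direction I would proceed by path length. Type (i) consists precisely of the proper mixed extensions of $P_3$: since $r\ge 2$ the third block is a clique, so (after reversing the path if necessary) at least one end block is a clique, the improper case is excluded, and Proposition~\ref{mep3} gives membership in $\mathcal{G}''$ at once. For types (ii)--(iv) I would write down the $4\times 4$ quotient matrix and check that its characteristic polynomial factors as $(x-\mu)\,q(x)$ with $\mu\in\{0,-1\}$ and $q(x)=x^3-bx^2-cx+d$ the cubic whose coefficients are recorded in Table~\ref{bcd}; this is the step that certifies $H\in\mathcal{G}$. The correct signature is then forced by three facts: the three roots of $q$ are real (they are eigenvalues of the symmetric matrix $A$); their product equals $-d<0$, since $d>0$ by Table~\ref{bcd}; and the smallest root lies below $-1$ because a connected graph that is not a disjoint union of complete graphs has smallest eigenvalue $<-1$. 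Together with the positivity of the largest root (Perron--Frobenius, as $Q$ is nonnegative and irreducible) these force exactly two positive roots and one root $<-1$. Type (v) is handled identically after splitting two trivial factors off the $5\times 5$ quotient matrix.

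For the ``only if'' direction I would invoke the classification of~\cite{H}. A connected graph in $\mathcal{G}''$ has no isolated vertices and, by that classification, is a mixed extension of a path $P_m$ with $m\le 5$. It then remains to intersect the admissible types with the two defining conditions of $\mathcal{G}''$. The $P_3$ types and the structurally special $P_4$ and $P_5$ types in (ii) and (v) satisfy the eigenvalue-membership condition for all admissible $q,r,s$ (and $p$), whereas for a $P_4$ extension with one interior coclique (type $(p,q,-r,s)$) or with all blocks cliques (type $(p,q,r,s)$) the requirement that $0$ or $-1$ be an eigenvalue of $Q$ becomes an explicit polynomial relation among the block sizes; its positive-integer solutions are exactly the finite lists in (iii) and (iv).

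The principal obstacle is this final enumeration. One must solve $\det Q=0$ or $\det(Q+I)=0$ over the positive integers for the $P_4$ (and $P_5$) extensions and prove that the recorded solution set is complete; this is where the computer search mentioned in the abstract carries the combinatorial bookkeeping. By contrast, the reduction to $Q$ and the sign analysis underlying the ``if'' direction are both routine once the quotient matrices are written out.
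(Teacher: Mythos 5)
Your overall skeleton---reduce to the quotient matrix $Q$ via the equitable partition, then force the sign pattern from $d>0$, Perron--Frobenius, and the ``smallest eigenvalue $<-1$ unless the graph is a union of cliques'' argument---is exactly the machinery of Section 2 and Proposition~\ref{mep3}, and your ``if'' direction is sound modulo the routine determinant computations you defer. Note also that the paper gives no independent proof of this theorem at all: it is stated as the restriction to $\mathcal{G}''$ of the classification of $\mathcal{G}$ established in \cite{H}, so a citation of \cite{H} at full strength is the paper's entire argument.

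The genuine gap is in your ``only if'' direction. You invoke \cite{H} only for the weak statement that a connected graph in $\mathcal{G}''$ is a mixed extension of some path $P_m$ with $m\le 5$, which leaves you the task of determining exactly which types and block sizes qualify---in particular, proving that the ten triples in (iii) and the eight quadruples in (iv) are the \emph{only} positive-integer solutions, and that no $P_4$ or $P_5$ types other than those in (ii)--(v) occur. You then hand this off to ``the computer search mentioned in the abstract.'' That is a misreading of the paper: the enumeration in Section 4 generates the graphs in $\mathcal{G}''$ on at most $25$ vertices \emph{from} Table~\ref{bcd}, i.e., it presupposes this very theorem, and its purpose is to detect cospectral pairs, not to establish the classification. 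Moreover, no search bounded by $25$ vertices can certify completeness of families with unbounded parameters: in (iii) the coclique size $r$ is arbitrary, and (ii) and (v) each carry free parameters, so one must show that the Diophantine conditions $\det Q=0$ or $\det(Q+I_4)=0$ (respectively, two eigenvalues of the $5\times 5$ quotient in $\{0,-1\}$ for $P_5$) admit no solutions beyond the listed ones. That analysis is the content of \cite{H} and appears nowhere in the present paper. So either quote \cite{H} for the full classification---as the paper does---or supply that number-theoretic argument yourself; as written, the core of your ``only if'' direction is unproven.
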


The characteristic polynomial $p(x)$ of a graph in $\mathcal{G''}$ with $n$ vertices can be written as:
\[
p(x)=(x^3-bx^2-cx+d)(x+1)^bx^{n-b-3}, \mbox{ with } b\geq 0 \mbox{ and } d>0
\]
(note that the multiplicity of the eigenvalue $-1$ equals $b$, because trace$(A)=0$).
So the nonzero part of the spectrum of a graph in $\mathcal{G''}$ is determined by the coefficients $b$, $c$ and $d$.
Table~\ref{bcd} gives these coefficients for each type of the above classification.
If two graphs in the classification have the same $b$,$c$ and $d$, then the nonzero part of the spectrum is the same.
We will call such a pair {\em pseudo-cospectral}.
If two pseudo-cospectral graphs have different order, we can extend the smaller graph (or both graphs) with some isolated vertices, so that
the two graphs become cospectral.
Therefore Table~\ref{bcd} gives all information needed to decide which proper mixed extensions of $P_3$ are determined by their spectrum.
Nevertheless, it is far too complicated to give a general result.
Therefore we present some special cases.

\begin{theorem}
{\rm \cite{CH}} Suppose $H$ is a mixed extension of $P_3$ of order $n$ and type $(p,q,r)$ with $p,q,r \geq 1$.
Then $H$ is determined by the spectrum of its adjacency matrix.
\end{theorem}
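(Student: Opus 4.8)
The plan is to avoid analysing $H$ directly and instead to feed the problem into the classification of $\mathcal{G}''$ recorded in Table~\ref{bcd}. As a preliminary I would separate off the improper case $p=r=1$: then both end-parts are single vertices, so $H$ is the complete split graph $CS_{q,2}$ when $q\ge 2$ and the path $P_3=K_{1,2}$ when $q=1$, and both are determined by their spectrum by the results quoted at the start of Section~3. From now on I may therefore assume $H$ is proper, whence $d>0$, $b=p+q+r-3=n-3$, and the characteristic polynomial $(x^{3}-bx^{2}-cx+d)(x+1)^{b}$ carries no factor $x$; in particular $H$ is nonsingular and, by Proposition~\ref{mep3}, has exactly two positive eigenvalues and one eigenvalue below $-1$. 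If $H'$ is cospectral with $H$, then $H'$ shares all of these properties, so $H'\in\mathcal{G}''$ and $H'$ has no isolated vertex; hence $H'$ is one of the graphs listed in the two classification theorems above, and its triple $(b,c,d)$ is a row of Table~\ref{bcd}.

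The key discriminating invariant is the multiplicity of the eigenvalue $0$, which for a proper graph in $\mathcal{G}''$ equals $n-b-3$ and is preserved under cospectrality; for $H$ it is $0$. I would then read $n-b-3$ off Table~\ref{bcd} for each family and check that it is strictly positive in every case except the all-clique type $(p,q,r)$. For instance it equals $p+q-2$, $p-1$, $q-1$ for the types $(-p,-q,r)$, $(-p,q,r)$, $(p,-q,r)$; it equals $2,3,4$ for the three $P_4$-extensions of part (ii); it equals $r$ for every type $(p,q,-r,s)$ of part (iii); it equals $1$ for the all-clique $P_4$-extensions of part (iv); it equals $q$ for $(1,p,-q,r,1)$; and it equals $q+r-2$, respectively $r-1$, for the two disconnected families. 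The only way $n-b-3$ can vanish outside the $(p,q,r)$ row is at the boundary values $p=q=1$, $p=1$, or $q=1$ of the first three families, but each of these collapses a coclique into a single vertex and so describes a graph that is \emph{itself} an all-clique extension $(p,q,r)$; and the borderline disconnected case $K_p+CS_{q,r}$ with $r=1$ is $K_p\cup K_{q+1}$, whose least eigenvalue is $-1$, so it lies outside $\mathcal{G}''$ and cannot be cospectral with $H$. Consequently $H'$ is isomorphic to a proper all-clique mixed extension of $P_3$, say of type $(p',q',r')$.

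What remains is to show that the spectrum pins down the parameters \emph{within} the family $(p,q,r)$, and here I would simply invert the relevant row of Table~\ref{bcd}: from $n$ and $c$ one gets $p'r'=2n-3-c$, then $d$ yields $p'q'r'$ and hence $q'=p'q'r'/(p'r')$, after which $p'+r'=n-q'$ together with $p'r'$ fixes the unordered pair $\{p',r'\}$. As swapping the two end-parts is an automorphism of $P_3$, this forces $(p',q',r')=(p,q,r)$ up to that symmetry, i.e. $H'\cong H$. I expect this inversion to be the only genuine computation; the real care lies in the previous paragraph, namely in verifying the value of $n-b-3$ for every row of Table~\ref{bcd} and, above all, in handling the degenerate parametrizations (a $\pm1$ part, or $p=r=1$) so that no graph of the classification is missed and none is counted as a spurious cospectral mate. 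That exhaustive case-checking, rather than any single hard inequality, is the main obstacle.
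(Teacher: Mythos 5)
Your proposal is correct and is essentially the paper's own argument: both rest on the observation that the all-clique type $(p,q,r)$ is the unique family in Table~\ref{bcd} with $b=n-3$ (equivalently, with eigenvalue $0$ of multiplicity zero), so any cospectral mate must again be of that type, after which matching $b$, $c$, $d$ recovers the parameters up to the end-swap symmetry. Your extra care with the degenerate parametrizations (the improper case $p=r=1$, the collapses at $p=1$ or $q=1$, and the borderline $K_p+CS_{q,1}$) merely makes explicit what the paper's one-line claim that every other type satisfies $b<n'-3$ leaves implicit.
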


\begin{proof}
From Table~\ref{bcd} we see that for this case the coefficient $b$ equals $p+q+r-3=n-3$.
For every graph $H'$ of order $n'$ pseudo-cospectral with $H$, which belongs to one of the other types, we have $b<n'-3$.
So $H'$ has more vertices than $H$, and we cannot obtain a graph cospectral with $H$ by adding isolated vertices to $H'$.
If $H'$ is a mixed extension of $P_3$ of type $(p',q',r')$ with $p',q',r'\geq 1$,
and $H'$ is cospectral with $H$, then $H$ and $H'$ have the same coefficients $b$, $c$ an $d$,
which implies $p=p'$, $q=q'$, and $r=r'$.
\end{proof}

The following results follow straightforwardly from Table~\ref{bcd}.

\begin{proposition}\label{3types}
The following types of mixed extensions of $P_3$ are pseudo-cospectral with a non-isomorphic graph in $\mathcal G''$.
\\
(i) Type $(p,-q,p)$ with $K_{p}+CS_{p,2q}$ where $p\geq 2,q\geq1$,
\\
(ii) type $(p,(p-1)(p-2),p)$ with $K_{p(p-1)} + K_{p-1,p-1}$, where $p\geq 3$,
\\
(iii) type $(p,q,p)$ with $K_p + CS_{p+q-1,r}$ where $r=1+pq/(p+q-1)$ is an integer, and $p\geq 2$, $q\geq 1$.
\end{proposition}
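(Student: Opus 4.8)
The plan is to observe that ``pseudo-cospectral'' means exactly that the two graphs share the same triple $(b,c,d)$ of characteristic-polynomial coefficients, so the entire proposition reduces to three direct verifications against Table~\ref{bcd}, together with a short membership-and-non-isomorphism argument. For each case I would locate the two relevant rows, carry out the indicated substitutions, and check that the resulting triples agree.

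In case (i) I take the row for type $(p,-q,r)$ and set $r=p$, obtaining $b=2p-2$, $c=2pq-(p-1)^2$, $d=2pq(p-1)$; then I read the row for $K_p+CS_{q,r}$ and insert the parameters of $CS_{p,2q}$ (replacing the table's generic ``$q$'' by $p$ and ``$r$'' by $2q$), which returns the same three values. Case (ii) compares type $(p,q,r)$ with $q=(p-1)(p-2)$ and $r=p$ against $K_p+K_{q,r}$ specialized to $K_{p(p-1)}+K_{p-1,p-1}$ (table ``$p$''$\to p(p-1)$, ``$q$''$\to p-1$, ``$r$''$\to p-1$); here both triples should collapse to $(p^2-p-1,\;(p-1)^2,\;(p-1)^2(p^2-p-1))$, so the work is a single polynomial identity in $p$ that I would expand once and match term by term.

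For case (iii) I compare type $(p,q,p)$ (the $(p,q,r)$ row with $r=p$) against $K_p+CS_{p+q-1,r}$ with $r=1+pq/(p+q-1)$. The clean route is to write $s=p+q-1$ and isolate the single simplifying identity $sr=s+pq=pq+p+q-1$; substituting it into the $CS$-row formulas $b=p+s-2$, $c=sr-(p-1)(s-1)$, $d=sr(p-1)$ makes them collapse to $2p+q-3$, $2q+4p-p^2-3$, and $p^2q+p^2-2p-q+1$, matching the $(p,q,p)$ row. The hypothesis that $r$ is an integer is precisely what makes $CS_{p+q-1,r}$ a genuine graph, so no further case analysis is needed.

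Finally I would dispatch membership and non-isomorphism. The three right-hand graphs lie in $\mathcal{G}''$ by the theorem classifying disconnected graphs in $\mathcal{G}''$: $K_p+CS_{p,2q}$ and $K_p+CS_{p+q-1,r}$ fall under its second case and $K_{p(p-1)}+K_{p-1,p-1}$ under its first, with the stated ranges $p\geq 2,\,q\geq 1$ (resp.\ $p\geq 3$) guaranteeing the required parameter bounds. Non-isomorphism is then immediate, since each left member is a mixed extension of the connected path $P_3$ and hence connected, whereas each right member is a disjoint union of two nonempty graphs and hence disconnected. The only genuine obstacle is bookkeeping: keeping the table's generic symbols $p,q,r$ distinct from the specific parameters being substituted, and carrying out the degree-four expansions in (ii) and (iii) without sign errors. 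The identity $sr=pq+p+q-1$ in case (iii) is the one step worth isolating, since it is what forces the $CS$ coefficients to reduce to polynomials in $p$ and $q$.
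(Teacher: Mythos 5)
Your proposal is correct and follows exactly the route the paper intends: the paper gives no written proof beyond the remark that the proposition ``follows straightforwardly from Table~\ref{bcd}'', and your row-by-row substitutions (with the key identity $sr=pq+p+q-1$ in case (iii)), together with the membership check via the disconnected classification and the connected-versus-disconnected non-isomorphism argument, are precisely that verification carried out in full.
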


Notice that this proposition does not give any graph which is cospectral and non-isomorphic with a mixed extension of $P_3$,
because in all three cases the second graph has more vertices than the first one.
For the pineapple graph $K_p^q$, which is a mixed extension of $P_3$ of type $(p-1,1,-q)$, we find (see~\cite{TSH}).

\begin{proposition}\label{pine}
The pineapple graph $K_{2p}^{p^2}$ is cospectral with the mixed extension of $P_3$ of type $(p,-p,p)$ extended with $p(p-1)$
isolated vertices.
\end{proposition}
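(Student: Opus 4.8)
The plan is to prove cospectrality by matching characteristic polynomials, exploiting that both graphs lie in $\mathcal{G}''$ and hence have characteristic polynomial of the shape $(x^3-bx^2-cx+d)(x+1)^b x^{\,n-b-3}$; everything then reduces to checking the three coefficients $b,c,d$ together with the order $n$.

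First I would recognize the pineapple as a mixed extension of $P_3$. By definition $K_n^q$ is of type $(n-1,1,-q)$, so $K_{2p}^{p^2}$ is of type $(2p-1,1,-p^2)$. Reversing the path is a graph isomorphism, so this coincides with type $(-p^2,1,2p-1)$, which is exactly the row labelled $(-p,q,r)$ in Table~\ref{bcd} evaluated at $(p,q,r)=(p^2,1,2p-1)$.

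Next I would read off $b,c,d$ from Table~\ref{bcd} for the two graphs and check that they agree. For the pineapple (row $(-p,q,r)$) this gives $b=q+r-2=2p-2$, $c=q+r+pq-1=p^2+2p-1$, and $d=pq(r-1)=2p^2(p-1)$. For type $(p,-p,p)$ (row $(p,-q,r)$ with $p=q=r$) the table gives $b=p+r-2=2p-2$, $c=q(p+r)+(p-1)(1-r)=2p^2-(p-1)^2=p^2+2p-1$, and $d=qr(p-1)+pq(r-1)=2p^2(p-1)$. Thus the cubic factors coincide, and the two graphs are pseudo-cospectral.

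Finally I would compare the orders. The pineapple has $n=(2p-1)+1+p^2=p^2+2p$ vertices, while type $(p,-p,p)$ has $3p$ vertices, the difference being exactly $p(p-1)$. Since adding $k$ isolated vertices multiplies the characteristic polynomial by $x^k$ while leaving the factor $(x^3-bx^2-cx+d)(x+1)^b$ untouched, extending the $(p,-p,p)$-graph by $p(p-1)$ isolated vertices yields a graph of order $p^2+2p$ with the identical characteristic polynomial as the pineapple, so the two are cospectral. The only delicate point is the algebraic simplification of $c$ for type $(p,-p,p)$, where one must note $(p-1)(1-r)=-(p-1)^2$; otherwise the argument is a direct table lookup.
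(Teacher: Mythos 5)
Your proof is correct: both coefficient computations check out ($b=2p-2$, $c=p^2+2p-1$, $d=2p^2(p-1)$ for the pineapple viewed as type $(-p^2,1,2p-1)$ and for type $(p,-p,p)$), and the order bookkeeping $p^2+2p=3p+p(p-1)$ is right, so the full characteristic polynomials $(x^3-bx^2-cx+d)(x+1)^b x^{n-b-3}$ coincide after padding with isolated vertices. The paper itself gives no written proof of this proposition (it simply cites \cite{TSH}), but your table-lookup argument --- matching $b,c,d$ from Table~\ref{bcd} and adding isolated vertices so the exponent of $x$ agrees --- is exactly the mechanism underlying all cospectrality claims in this paper, so this is essentially the intended approach with the details filled in.
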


See \cite{TSH'} for more examples of graphs non-isomorphic but cospectral with $K_p^q$.
As remarked before, there exists no connected example.
However, there do exist connected non-isomorphic cospectral mixed extensions of $P_3$.

\begin{proposition}\label{cosp3}
The mixed extensions of $P_3$ of types $(p,-q,q(2q-p-1)/(q-p))$ and $(-q,2q-1,p(2q-p-1)/(q-p))$
are cospectral whenever $q(2q-p-1)/(q-p)$ is a positive integer, and $p,q\geq 1$.
\end{proposition}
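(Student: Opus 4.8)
The plan is to reduce everything to the coefficient formulas collected in Table~\ref{bcd}. As explained in the paragraph preceding the proposition, a graph in $\mathcal{G}''$ on $n$ vertices has characteristic polynomial $p(x)=(x^3-bx^2-cx+d)(x+1)^{b}x^{n-b-3}$, so it is determined by the single integer $n$ together with the triple $(b,c,d)$. Consequently two graphs of $\mathcal{G}''$ are cospectral if and only if they share the same order $n$ and the same $(b,c,d)$. Thus the proposition will follow once I read off the two triples from the table and check four scalar identities: $b_1=b_2$, $c_1=c_2$, $d_1=d_2$, and $n_1=n_2$.

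To organize the algebra I would set $k=(2q-p-1)/(q-p)$, so that the first type is $(p,-q,qk)$ and the second is $(-q,2q-1,pk)$, and record the defining relation $(q-p)k=2q-p-1$. This single identity is what produces all the cancellations below. The first triple $(b_1,c_1,d_1)$ comes directly from the row $(p,-q,r)$ of Table~\ref{bcd} with $r=qk$. For the second triple I would match the proposition's type $(-q,2q-1,pk)$ to the table row $(-p,q,r)$, replacing the table's $p,q,r$ by $q,\,2q-1,\,pk$ respectively, and I would write this relabeling out explicitly before substituting.

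With the two triples in hand, the verification is routine: in each of the expressions $b_1-b_2$, $c_1-c_2$, $d_1-d_2$ and $n_1-n_2$ I would expand, collect the terms carrying a factor $k$, and then replace $(q-p)k$ by $2q-p-1$; in every case the result collapses to $0$. Since the positive-integer hypothesis on $qk=q(2q-p-1)/(q-p)$ together with $p,q\ge 1$ guarantees that both $(p,-q,qk)$ and $(-q,2q-1,pk)$ are proper mixed extensions of $P_3$, Proposition~\ref{mep3} places both graphs in $\mathcal{G}''$; equality of $n$ and of $(b,c,d)$ then gives identical characteristic polynomials, hence cospectrality.

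There is no conceptual difficulty here; the work is entirely bookkeeping, and the main places to slip are (a) the relabeling of the table row $(-p,q,r)$ for the second graph, where transposing the roles of $q$ and $2q-1$ would corrupt $c$ and $d$, and (b) remembering to verify the order identity $n_1=n_2$ as well, since matching $(b,c,d)$ alone only yields \emph{pseudo}-cospectrality, and it is the extra equality of orders that upgrades this to genuine cospectrality.
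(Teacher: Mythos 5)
Your proposal is correct and takes essentially the same route as the paper, which gives no explicit argument for this proposition but presents it as one of the results that ``follow straightforwardly from Table~\ref{bcd}'': one reads off $(b,c,d)$ from the rows $(p,-q,r)$ and $(-p,q,r)$ (with the relabeling you describe), and verifies the identities for $b$, $c$, $d$ and $n$ using the relation $(q-p)k=2q-p-1$. Your explicit insistence on also matching the orders $n$ --- since equal $(b,c,d)$ alone only gives pseudo-cospectrality --- is precisely the point the paper leaves implicit, and it checks out.
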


Note that there are infinitely many values of $p$ and $q$ for which the above fraction is integral.
For example if $p=1$ or $p=q-1$.

\section{Enumeration}

Using Table~\ref{bcd}, we generated by computer (using Maple) a list of all non-isomorphic graphs in $\mathcal G''$ on at
most $25$ vertices.
The list contains almost 10000 graphs. %9361 graphs.
We ordered the graphs lexicographically with respect to the coefficients $b,c,d$.
Then the pseudo-cospectral graphs in $\mathcal G''$ become consecutive items in the table with the same $b,c,d$.
Since the list is very long we only consider the mixed extensions of $P_3$ that have at least one non-isomorphic pseudo-cospectral mate
in the list.
By use of the shortened list we found the pseudo-cospectral examples of Propositions~\ref{3types} and \ref{cosp3}.
Next we deleted the cases given in Proposition~\ref{3types}, \ref{pine} and \ref{cosp3} from the list.
The final list is given in Table~\ref{25}.
Thus this table together with Propositions~\ref{3types} to \ref{cosp3} give all mixed extensions of $P_3$ of order $n\leq 25$
for which there exist at least one pseudo-cospectral graph in $\mathcal G''$ of order at most $25$.
Since Proposition~\ref{3types} gives no graphs cospectral with a mixed extension of $P_3$, we can conclude the following:

\begin{theorem}
Suppose $H$ is a proper mixed extension of $P_3$ of order $n\leq 25$.
Then $H$ is determined by the spectrum of the adjacency matrix if and only if
$H$ is not one of the graphs given in Propositions~\ref{pine} or \ref{cosp3},
and every graph in Table~\ref{25} pseudo-cospectral with $H$ has more vertices than $H$.
\end{theorem}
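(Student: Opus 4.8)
The plan is to reduce the question ``is $H$ determined by its spectrum'' to a purely combinatorial comparison of orders among pseudo-cospectral graphs in $\mathcal G''$, and then to read off the answer from the enumeration.

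First I would pin down the structure of an arbitrary cospectral mate. Let $H'$ be any graph cospectral with $H$. Since $H$ is a proper mixed extension of $P_3$, Proposition~\ref{mep3} places $H$ in $\mathcal G''$, so all but three of its eigenvalues lie in $\{0,-1\}$, exactly two are positive, and exactly one is less than $-1$. As $H'$ has the same spectrum, the same holds for $H'$, so $H'\in\mathcal G$. Deleting the isolated vertices of $H'$ removes only eigenvalues equal to $0$ and changes neither the two positive eigenvalues nor the eigenvalue below $-1$; hence the resulting graph $H''$ has no isolated vertices and lies in $\mathcal G''$, and by construction it is pseudo-cospectral with $H$. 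Because cospectral graphs have equally many vertices and $H$ (a mixed extension of a connected graph) has none isolated, $H'$ has $n$ vertices, so the order $n''$ of $H''$ satisfies $n''\le n$ and $H'=H''+(n-n'')K_1$.

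Next I would turn this into a determined-by-spectrum criterion. Since $H$ is connected with no isolated vertices, the mate $H'=H''+(n-n'')K_1$ is isomorphic to $H$ only when $n''=n$ and $H''\cong H$. Consequently $H$ fails to be determined by its spectrum exactly when there is a graph $H''\in\mathcal G''$, pseudo-cospectral with $H$, non-isomorphic to $H$, with $n''\le n$. The remaining task is to identify all such $H''$, and here I would invoke the classification of $\mathcal G''$ (the two structure theorems) together with the computer enumeration of all graphs in $\mathcal G''$ on at most $25$ vertices, which records every pseudo-cospectral relation among them: for a mixed extension of $P_3$ of order $n\le 25$, every pseudo-cospectral mate of order $\le n$ appears either in Proposition~\ref{3types}, \ref{pine} or~\ref{cosp3}, or in Table~\ref{25}.

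Finally I would combine the sources. The pairs in Proposition~\ref{3types} always have a mate of strictly larger order, so they never furnish a graph $H''$ with $n''\le n$ and hence never produce a cospectral mate. Propositions~\ref{pine} and~\ref{cosp3} instead exhibit, for the mixed extensions of $P_3$ occurring there, an explicit non-isomorphic cospectral graph (in Proposition~\ref{pine} this is the smaller partner padded with isolated vertices), so each such $H$ is not determined by its spectrum. For the remaining pseudo-cospectral pairs recorded in Table~\ref{25}, a mate with $n''\le n$ exists precisely when the tabulated partner does not have strictly more vertices than $H$; when it does exist, adjoining $n-n''$ isolated vertices (or taking $H''$ itself if $n''=n$) yields a non-isomorphic cospectral mate, and when every tabulated partner is larger no such mate arises. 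Assembling these facts gives exactly the stated equivalence: $H$ is determined by its spectrum if and only if $H$ is not among the graphs of Propositions~\ref{pine} or~\ref{cosp3} and every graph of Table~\ref{25} pseudo-cospectral with $H$ has more vertices than $H$. The only genuinely delicate point is completeness: one must be certain that Propositions~\ref{3types}, \ref{pine}, \ref{cosp3} together with Table~\ref{25} exhaust every pseudo-cospectral relation in $\mathcal G''$ up to order $25$, and this is precisely what the exhaustive computer search of $\mathcal G''$ secures; I expect that step to carry the real weight of the argument.
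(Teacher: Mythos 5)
Your proposal is correct and takes essentially the same route as the paper: the paper's proof is precisely the enumeration argument given in the paragraph preceding the theorem---any cospectral mate reduces, after deleting isolated vertices, to a pseudo-cospectral graph in $\mathcal G''$ of order at most $n\le 25$, and the exhaustive computer search sorts all such pseudo-cospectral relations into Propositions~\ref{3types}--\ref{cosp3} and Table~\ref{25}. You simply make explicit the reduction steps (isolated-vertex removal, order comparison, non-isomorphism of the padded mate) that the paper treats as understood, carrying exactly the same logical weight on the completeness of the enumeration.
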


\subsection*{Acknowledgments}

We thank one referee for the correction of some nasty errors,
and another referee for pointing at the references \cite{CFMR} and \cite{S}.

This work is partially supported by TUBITAK (the Scientific and Technological Research Council of Turkey) research project 117F489.
\\
\begin{table}
\caption{Graphs in $\mathcal G''$ of order $n\leq 25$ pseudo-cospectral with a mixed extension of $P_3$} \label{25}
{\scriptsize
\begin{tabular}{ll}
\\
\begin{tabular}{|rrr|c|rrrr|r|}
\hline
$b$ &$c$ &$d$    &Type           & $p$ &$q$  &$r$&$s$&$n$ \\ \hline
   2 & 10 &  14  &$(-p,q,r)$     &  7 & 1 & 3 &   & 11 \\
   2 & 10 &  14  &$(1,p,-q,r,1)$ &  1 & 3 & 2 &   &  8 \\ \hline
   3 &  8 &  12  &$(-p,q,r)$     &  4 & 1 & 4 &   &  9 \\
   3 &  8 &  12  &$(-p,q,r,-s)$  &  2 & 1 & 3 & 2 &  8 \\ \hline
   3 &  9 &  15  &$(-p,q,r)$     &  5 & 1 & 4 &   & 10 \\
   3 &  9 &  15  &$(1,p,-q,r,1)$ &  1 & 2 & 3 &   &  8 \\ \hline
   3 & 10 &  12  &$(-p,q,r)$     &  3 & 2 & 3 &   &  8 \\
   3 & 10 &  12  &$Kp+CSq,r$     &  2 & 3 & 4 &   &  9 \\ \hline
   3 & 14 &  18  &$(-p,-q,r)$    & 3   &    2   &    4  &        &     9     \\
   3 & 14 &  18  &$(-p,q,-r,s)$  & 3   &    3   &    2  &     1  &     9     \\ \hline
   3 & 16 &  24  &$(-p,-q,r)$    & 4   &    2   &    4  &        &     10    \\
   3 & 16 &  24  &$(-p,q,r)$     & 6   &    2   &    3  &        &     11    \\  \hline
   3 & 16 &  36  &$(-p,q,r)$     & 12  &    1   &    4  &        &     17    \\
   3 & 16 &  36  &$(-p,-q,-r,s)$ & 2   &    2   &    3  &     3  &     10    \\
   3 & 16 &  36  &$(1,p,-q,r,1)$ & 2   &    4   &    2  &        &     10    \\
   3 & 16 &  36  &$K_p+CS_{q,r}$ & 3   &    2   &    9  &        &     14    \\  \hline
   3 & 18 &  28  &$(-p,q,r)$     & 7   &    2   &    3  &        &     12    \\
   3 & 18 &  28  &$(p,-q,r)$     & 2   &    4   &    3  &        &     9    \\   \hline
   3 & 28 &  60  &$(-p,-q,r)$    & 10  &    2   &    4  &        &     16   \\
   3 & 28 &  60  &$(1,p,-q,r,1)$ & 2   &    7   &    2  &        &     13   \\
   3 & 28 &  60  &$K_p+CS_{q,r}$ & 3   &    2   &    15 &        &     20   \\   \hline
   3 & 40 &  72  &$(-p,-q,r)    $& 6   &    4   &    4  &        &     14   \\
   3 & 40 &  72  &$(-p,q,r)     $& 18  &    2   &    3  &        &     23   \\   \hline
   3 & 52 &  108 &$(-p,-q,r)    $& 9   &    4   &    4  &        &     17   \\
   3 & 52 &  108 &$(1,p,-q,r,1) $& 2   &    13  &    2  &        &     19   \\  \hline
   4 &  9 &  12  &$(-p,q,r)$     & 2   &    2   &    4  &        &     8    \\
   4 &  9 &  12  &$K_p+CS_{q,r}$ & 2   &    4   &    3  &        &     9    \\   \hline
   4 &  15 & 30  &$(-p,q,r)$     & 5   &    2   &    4  &        &     11   \\
   4 &  15 & 30  &$(p,-q,r)$     & 2   &    3   &    4  &        &     9    \\    \hline
   4 &  21 & 24  &$(-p,-q,r)$    & 2   &    3   &    5  &        &     10   \\
   4 &  21 & 24  &$K_p+CS_{q,r}$ & 2   &    4   &    6  &        &     12   \\    \hline
   4 &  23 & 54  &$(-p,q,r)$     & 9   &    2   &    4  &        &     15   \\
   4 &  23 & 54  &$K_p+CS_{q,r}$ & 3   &    3   &    9  &        &     15     \\   \hline
   4 &  39 & 102 &$(-p,q,r)$     & 17  &    2   &    4  &        &     23     \\
   4 &  39 & 102 &$(1,p,-q,r,1)$ & 2   &    8   &    3  &        &     15     \\    \hline
%  %
5 &      14 &     40  &$ (-p,q,r)      $ &  8   &    1  &     6     &      &    15 \\
5 &      14 &     40  &$ K_p + CS_{q,r}$ &  3   &    4  &     5     &      &    12 \\  \hline
5 &      16 &     20  &$ (-p,-q,r)     $ &  2   &    2  &     6     &      &    10 \\
5 &      16 &     20  &$ K_p + CS_{q,r}$ &  2   &    5  &     4     &      &    11 \\  \hline
5 &      18 &     48  &$ (-p,q,r)      $ &  6   &    2  &     5     &      &    13 \\
5 &      18 &     48  &$ K_p + CS_{q,r}$ &  3   &    4  &     6     &      &    13 \\   \hline
5 &      24 &     90  &$ (-p,q,r)      $ &  18  &    1  &     6     &      &    25 \\
5 &      24 &     90  &$ K_p + CS_{q,r}$ &  4   &    3  &     10    &      &    17 \\   \hline
5 &      28 &     88  &$ (-p,q,r)      $ &  11  &    2  &     5     &      &    18 \\
5 &      28 &     88  &$ (1,p,-q,r,1)  $ &  2   &    5  &     4     &      &    13 \\   \hline
5 &      30 &     72  &$ (-p,q,r)      $ &  8   &    3  &     4     &      &    15 \\
5 &      30 &     72  &$ K_p + CS_{q,r}$ &  3   &    4  &     9     &      &    16 \\   \hline
5 &      36 &     90  &$ (-p,-q,r)     $ &  6   &    3  &     6     &      &    15 \\
5 &      36 &     90  &$ (-p,q,r)      $ &  10  &    3  &     4     &      &    17 \\  \hline
5 &      36 &     120 &$ (-p,-q,r)     $ &  12  &    2  &     6     &      &    20 \\
5 &      36 &     120 &$ (-p,q,r)      $ &  15  &    2  &     5     &      &    22 \\  \hline
\end{tabular}
& \begin{tabular}{|rrr|c|rrrr|r|}
\hline
$b$ &  $c$   &     $d$ & Type        & $p$ &$q$&$r$&$s$&$n$ \\ \hline
 5 &      42 &     144 &$ (-p,q,r)      $ &  18  &    2  &     5     &      &    25 \\
 5 &      42 &     144 &$ K_p + CS_{q,r}$ &   4   &    3  &     16    &      &    23 \\   \hline
 5 &      57 &     153 &$ (-p,q,r)      $ &   17  &    3  &     4     &      &    24 \\
 5 &      57 &     153 &$ (p,-q,r)      $ &   3   &    9  &     4     &      &    16 \\   \hline
 5 &      70 &     200 &$ (-p,-q,r)     $ &   8   &    5  &     6     &      &    19 \\
 5 &      70 &     200 &$ (1,p,-q,r,1)  $ &   2   &    12 &     4     &      &    20 \\   \hline
 6 &      17 &     50  &$ (-p,q,r)      $ &   5   &    2  &     6     &      &    13 \\
 6 &      17 &     50  &$ K_p + CS_{q,r}$ &   3   &    5  &     5     &      &    13 \\   \hline
 6 &      19 &     48  &$ (-p,q,r)      $ &   4   &    3  &     5     &      &    12 \\
 6 &      19 &     48  &$ (p,-q,r)      $ &   2   &    3  &     6     &      &    11 \\   \hline
 6 &      19 &     60  &$ (-p,q,r)      $ &   6   &    2  &     6     &      &    14 \\
 6 &      19 &     60  &$ (-p,q,-r,s)   $ &   3   &    5  &     2     &  2   &    12 \\   \hline
 6 &      22 &     60  &$ (-p,q,r)      $ &   5   &    3  &     5     &      &    13 \\
 6 &      22 &     60  &$ K_p + CS_{q,r}$ &   3   &    5  &     6     &      &    14 \\    \hline
 6 &      25 &     90  &$ (-p,q,r)      $ &   9   &    2  &     6     &      &    17 \\
 6 &      25 &     90  &$ (1,p,-q,r,1)  $ &   2   &    4  &     5     &      &    13 \\    \hline
 6 &      28 &     84  &$ (-p,-q,r)     $ &   7   &    2  &     7     &      &    16 \\
 6 &      28 &     84  &$ (-p,q,r)      $ &   7   &    3  &     5     &      &    15 \\    \hline
 6 &      31 &     120 &$ (p,-q,r)      $ &   4   &    5  &     4     &      &    13 \\
 6 &      31 &     120 &$ (-p,q,r)      $ &   12  &    2  &     6     &      &    20 \\    \hline
 6 &      40 &     132 &$ (-p,q,r)      $ &   11  &    3  &     5     &      &    19 \\
 6 &      40 &     132 &$ (p,-q,r)      $ &   3   &    6  &     5     &      &    14 \\    \hline
 6 &      44 &     180 &$ (-p,-q,r)     $ &   15  &    2  &     7     &      &    24 \\
 6 &      44 &     180 &$ (1,p,-q,r,1)  $ &   3   &    7  &     4     &      &    16 \\    \hline
 6 &      50 &     90  &$ (-p,-q,r)     $ &   3   &    5  &     7     &      &    15 \\
 6 &      50 &     90  &$ (1,p,-q,r,1)  $ &   1   &    7  &     6     &      &    16 \\     \hline
 6 &      51 &     180 &$ (-p,-q,r)     $ &   10  &    3  &     7     &      &    20 \\
 6 &      51 &     180 &$ K_p + CS_{q,r}$ &   4   &    4  &     15    &      &    23 \\     \hline
 6 &      55 &     192 &$ (-p,q,r)      $ &   16  &    3  &     5     &      &    24 \\
 6 &      55 &     192 &$ (p,-q,r)      $ &   4   &    8  &     4     &      &    16 \\     \hline
 7  &      16 &      48  & $(-p,q,r)       $ &   4   &     2  &      7  &         &      13  \\
 7  &      16 &      48  & $(-p,q,r,-s)    $ &   2   &     2  &      6  &      2  &      12  \\   \hline
 7  &      20 &      60  & $(-p,q,r)       $ &   4   &     3  &      6  &         &      13  \\
 7  &      20 &      60  & $K_p + CS_{q,r} $ &   3   &     6  &      5  &         &      14  \\   \hline
 7  &      20 &      84  & $(-p,q,r)       $ &   12  &     1  &      8  &         &      21  \\
 7  &      20 &      84  & $(1,p,-q,r,1)   $ &   2   &     3  &      6  &         &      13  \\   \hline
 7  &      23 &      105 & $(-p,q,r)       $ &   15  &     1  &      8  &         &      24  \\
 7  &      23 &      105 & $K_p + CS_{q,r} $ &   4   &     5  &      7  &         &      16  \\   \hline
 7  &      26 &      108 & $(-p,q,r)       $ &   9   &     2  &      7  &         &      18  \\
 7  &      26 &      108 & $(p,-q,r)       $ &   3   &     4  &      6  &         &      13  \\   \hline
 7  &      28 &      120 & $(-p,q,r)       $ &   10  &     2  &      7  &         &      19  \\
 7  &      28 &      120 & $K_p + CS_{q,r} $ &   4   &     5  &      8  &         &      17  \\   \hline
 7  &      30 &      42  & $(-p,-q,r)      $ &   2   &     3  &      8  &         &      13  \\
 7  &      30 &      42  & $(-p,q,-r,s)    $ &   3   &     7  &      2  &      1  &      13  \\    \hline
 7  &      33 &      63  & $(-p,-q,r)      $ &   3   &     3  &      8  &         &      14  \\
 7  &      33 &      63  & $(1,p,-q,r,1)   $ &   1   &     4  &      7  &         &      14  \\    \hline
 7  &      35 &      135 & $(-p,q,r)       $ &   9   &     3  &      6  &         &      18  \\
 7  &      35 &      135 & $(p,-q,r)       $ &   3   &     5  &      6  &         &      14  \\    \hline
 7  &      38 &      150 & $(-p,q,r)       $ &   10  &     3  &      6  &         &      19  \\
 7  &      38 &      150 & $K_p + CS_{q,r} $ &   4   &     5  &      10 &         &      19  \\    \hline
 7  &      64 &      224 & $(-p,-q,r)      $ &   8   &     4  &      8  &         &      20  \\
 7  &      64 &      224 & $(-p,q,r)       $ &   14  &     4  &      5  &         &      23  \\    \hline
\end{tabular}
\end{tabular}
}
\end{table}
%
%\clearpage
%
\begin{table}
{\scriptsize
\begin{tabular}{ll}
\begin{tabular}{|rrr|c|rrrr|r|}
\hline
$b$ &$c$ &$d$   &            Type            &   $p$ &     $q$&      $r$&      $s$&      $n$ \\ \hline
 7  &      68 &      240 & $(-p,q,r)       $ &   15  &     4  &      5  &         &      24  \\
 7  &      68 &      240 & $K_p + CS_{q,r} $ &   4   &     5  &      16 &         &      25  \\    \hline
 7  &      78 &      210 & $(-p,-q,r)      $ &   5   &     6  &      8  &         &      19  \\
 7  &      78 &      210 & $(-p,q,r)       $ &   14  &     5  &      4  &         &      23  \\     \hline
 8  &      8  &      64  & $(p,-q,r)       $ &   3   &     2  &      7  &         &      12  \\
 8  &      8  &      64  & $(Kp+Kq,r)      $ &   9   &     2  &      4  &         &      15  \\   \hline
 8  &      13 &      32  & $(-p,q,r)       $ &   4   &     1  &      9  &         &      14  \\
 8  &      13 &      32  & $(p,q,r)        $ &   2   &     6  &      3  &         &      11  \\   \hline
 8  &      16 &      56  & $(-p,q,r)       $ &   7   &     1  &      9  &         &      17  \\
 8  &      16 &      56  & $K_p + CS_{q,r} $ &   3   &     7  &      4  &         &      14  \\   \hline
 8  &      18 &      72  & $(-p,q,r)       $ &   9   &     1  &      9  &         &      19  \\
 8  &      18 &      72  & $(-p,q,r,-s)    $ &   2   &     3  &      6  &      2  &      13  \\   \hline
 8  &      19 &      40  & $(-p,q,r)       $ &   2   &     5  &      5  &         &      12  \\
 8  &      19 &      40  & $(1,p,-q,r,1)   $ &   1   &     2  &      8  &         &      13  \\   \hline
 8  &      27 &      126 & $(-p,q,r)       $ &   9   &     2  &      8  &         &      19  \\
 8  &      27 &      126 & $K_p + CS_{q,r} $ &   4   &     6  &      7  &         &      17  \\   \hline
 8  &      33 &      144 & $(-p,q,r)       $ &   8   &     3  &      7  &         &      18  \\
 8  &      33 &      144 & $K_p + CS_{q,r} $ &   4   &     6  &      8  &         &      18  \\    \hline
 8  &      38 &      160 & $(-p,-q,r)      $ &   10  &     2  &      9  &         &      21  \\
 8  &      38 &      160 & $(p,-q,r)       $ &   3   &     5  &      7  &         &      15  \\    \hline
 8  &      45 &      180 & $(-p,q,r)       $ &   9   &     4  &      6  &         &      19  \\
 8  &      45 &      180 & $K_p + CS_{q,r} $ &   4   &     6  &      10 &         &      20  \\    \hline
 8  &      51 &      126 & $(-p,q,r)       $ &   7   &     6  &      4  &         &      17  \\
 8  &      51 &      126 & $K_p + CS_{q,r} $ &   3   &     7  &      9  &         &      19  \\    \hline
 8  &      54 &      270 & $(-p,q,r)       $ &   15  &     3  &      7  &         &      25  \\
 8  &      54 &      270 & $(1,p,-q,r,1)   $ &   3   &     7  &      6  &         &      18  \\    \hline
 8  &      57 &      240 & $(-p,-q,r)      $ &   10  &     3  &      9  &         &      22  \\
 8  &      57 &      240 & $(-p,q,r)       $ &   12  &     4  &      6  &         &      22  \\    \hline
 8  &      68 &      256 & $(-p,-q,r)      $ &   8   &     4  &      9  &         &      21  \\
 8  &      68 &      256 & $(p,-q,r)       $ &   3   &     8  &      7  &         &      18  \\     \hline
 8  &      128&      448 & $(-p,-q,r)      $ &   7   &     8  &      9  &         &      24  \\
 8  &      128&      448 & $(p,-q,r)       $ &   3   &     14 &      7  &         &      24  \\   \hline
 9  &      3  &      61  & $(p,q,r)        $ &   3   &     3  &      6  &         &      12  \\
 9  &      3  &      61  & $(p,q,-r,s)     $ &   3   &     6  &      1  &      3  &      13  \\   \hline
 9  &      4  &      90  & $(p,-q,r)       $ &   4   &     2  &      7  &         &      13  \\
 9  &      4  &      90  & $(p,q,-r,s)     $ &   7   &     2  &      2  &      3  &      14  \\   \hline
 9  &      9  &      49  & $(p,q,r)        $ &   2   &     4  &      6  &         &      12  \\
 9  &      9  &      49  & $(p,q,-r,s)     $ &   4   &     6  &      1  &      2  &      13  \\   \hline
 9  &      15 &      135 & $(p,-q,r)       $ &   4   &     3  &      7  &         &      14  \\
 9  &      15 &      135 & $(Kp+Kq,r)      $ &   10  &     3  &      5  &         &      18  \\   \hline
 9  &      18 &      64  & $(-p,q,r)       $ &   4   &     2  &      9  &         &      15  \\
 9  &      18 &      64  & $K_p + CS_{q,r} $ &   3   &     8  &      4  &         &      15  \\   \hline
 9  &      24 &      112 & $(-p,q,r)       $ &   7   &     2  &      9  &         &      18  \\
 9  &      24 &      112 & $(1,p,-q,r,1)   $ &   2   &     3  &      8  &         &      15  \\    \hline
 9  &      24 &      126 & $(-p,q,r)       $ &   14  &     1  &      10 &         &      25  \\
 9  &      24 &      126 & $K_p + CS_{q,r} $ &   4   &     7  &      6  &         &      17  \\    \hline
 9  &      28 &      72  & $(-p,q,r)       $ &   3   &     6  &      5  &         &      14  \\
 9  &      28 &      72  & $(-p,-q,r)      $ &   4   &     2  &      10 &         &      16  \\    \hline
 9  &      31 &      63  & $(-p,q,r)       $ &   3   &     7  &      4  &         &      14  \\
 9  &      31 &      63  & $(1,p,-q,r,1)   $ &   1   &     3  &      9  &         &      15  \\    \hline
 9  &      31 &      147 & $(-p,q,r)       $ &   7   &     3  &      8  &         &      18  \\
 9  &      31 &      147 & $K_p + CS_{q,r} $ &   4   &     7  &      7  &         &      18  \\    \hline
\end{tabular}
&
\begin{tabular}{|rrr|c|rrrr|r|}
\hline
$b$ &$c$ &$d$ &          Type              &     $p$ &$q$&$r$&$s$&$n$ \\ \hline     %
9  &      34 &      96  &$(-p,q,r)       $ &     4   &     6  &      5  &         &      15  \\
9  &      34 &      96  &$K_p + CS_{q,r} $ &     3   &     8  &      6  &         &      17  \\    \hline
9  &      34 &      144 &$(-p,q,r)       $ &     6   &     4  &      7  &         &      17  \\
9  &      34 &      144 &$(1,p,-q,r,1)   $ &     2   &     4  &      8  &         &      16  \\     \hline
9  &      38 &      168 &$(-p,q,r)       $ &     7   &     4  &      7  &         &      18  \\
9  &      38 &      168 &$K_p + CS_{q,r} $ &     4   &     7  &      8  &         &      19  \\   \hline
9  &      49 &      273 &$(-p,q,r)       $ &     13  &     3  &      8  &         &      24  \\
9  &      49 &      273 &$(1,p,-q,r,1)   $ &     3   &     6  &      7  &         &      18  \\   \hline
9  &      64 &      216 &$(-p,-q,r)      $ &     6   &     4  &      10 &         &      20  \\
9  &      64 &      216 &$(-p,q,r)       $ &     9   &     6  &      5  &         &      20  \\   \hline
9  &      94 &      336 &$(-p,q,r)       $ &     14  &     6  &      5  &         &      25  \\
9  &      94 &      336 &$(1,p,-q,r,1)   $ &     2   &     10 &      8  &         &      22  \\   \hline
 10  &    20 &     72   &  $(-p,q,r)      $ & 3    &   3   &    9     &      &    15   \\
 10  &    20 &     72   &  $K_p + CS_{q,r}$ & 3    &   9   &    4     &      &    16   \\    \hline
 10  &    27 &     144  &  $(-p,q,r)      $ & 8    &   2   &    10    &      &    20   \\
 10  &    27 &     144  &  $K_p + CS_{q,r}$ & 4    &   8   &    6     &      &    18   \\    \hline
 10  &    29 &     90   &  $(-p,q,r)      $ & 3    &   6   &    6     &      &    15   \\
 10  &    29 &     90   &  $K_p + CS_{q,r}$ & 3    &   9   &    5     &      &    17   \\     \hline
 10  &    32 &     168  &  $(-p,q,r)      $ & 7    &   3   &    9     &      &    19   \\
 10  &    32 &     168  &  $(p,-q,r)      $ & 3    &   4   &    9     &      &    16   \\   \hline
 10  &    35 &     168  &  $(-p,q,r)      $ & 6    &   4   &    8     &      &    18   \\
 10  &    35 &     168  &  $K_p + CS_{q,r}$ & 4    &   8   &    7     &      &    19   \\   \hline
 10  &    59 &     240  &  $(-p,q,r)      $ & 8    &   6   &    6     &      &    20   \\
 10  &    59 &     240  &  $K_p + CS_{q,r}$ & 4    &   8   &    10    &      &    22   \\   \hline
 10  &    63 &     364  &  $(-p,q,r)      $ & 13   &   4   &    8     &      &    25   \\
 10  &    63 &     364  &  $(p,-q,r)      $ & 4    &   7   &    8     &      &    19   \\   \hline
 10  &    66 &     330  &  $(-p,-q,r)     $ &  11  &    3   &    11  &       &     25  \\
 10  &    66 &     330  &  $(-p,q,r)      $ &  11  &    5   &    7   &       &     23  \\    \hline
 11  &    -12&     108  &  $(1,p,-q,r,1)  $ &  6   &    1   &    6   &       &     15  \\
 11  &    -12&     108  &  $K_p + CS_{q,r}$ &  7   &    6   &    3   &       &     16  \\    \hline
 11  &    24 &     108  &  $(-p,q,r)      $ &  4   &    3   &    10  &       &     17  \\
 11  &    24 &     108  &  $(-p,q,r,-s)   $ &  2   &    3   &    9   &    2  &     16  \\     \hline
 11  &    30 &     162  &  $(-p,q,r)      $ &  6   &    3   &    10  &       &     19  \\
 11  &    30 &     162  &  $K_p + CS_{q,r}$ &  4   &    9   &    6   &       &     19  \\   \hline
 11  &    45 &     297  &  $(-p,q,r)      $ &  11  &    3   &    10  &       &     24  \\
 11  &    45 &     297  &  $(1,p,-q,r,1)  $ &  3   &    5   &    9   &       &     19  \\   \hline
 11  &    48 &     216  &  $(-p,q,r)      $ &  6   &    6   &    7   &       &     19  \\
 11  &    48 &     216  &  $K_p + CS_{q,r}$ &  4   &    9   &    8   &       &     21  \\   \hline
 11  &    52 &     320  &  $(-p,q,r)      $ &  10  &    4   &    9   &       &     23  \\
 11  &    52 &     320  &  $K_p + CS_{q,r}$ &  5   &    8   &    10  &       &     23  \\   \hline
% 11   &   78  &    66   &  $(-p,-q,r)     $ &  1   &    6   &    12   &       &    19  \\
% 11   &   78  &    66   &  $(-p,q,r)      $ &  6   &    11  &    2    &       &    19  \\    \hline
 12   &   1   &    132  &  $(p,-q,r)      $ &  4   &    2   &    10   &       &    16  \\
 12   &   1   &    132  &  $K_p + CS_{q,r}$ &  12  &    2   &    6    &       &    20  \\    \hline
 12   &   19  &    66   &  $(-p,q,r)      $ &  3   &    2   &    12   &       &    17  \\
 12   &   19  &    66   &  $(p,q,r)       $ &  2   &    9   &    4    &       &    15  \\     \hline
 12   &   33  &    180  &  $(-p,q,r)      $ &  5   &    4   &    10   &       &    19  \\
 12   &   33  &    180  &  $K_p + CS_{q,r}$ &  4   &    10  &    6    &       &    20  \\   \hline
 12   &   43  &    210  &  $(-p,q,r)      $ &  5    &   6     &  8    &       & 19    \\
 12   &   43  &    210  &  $K_p + CS_{q,r}$ &  4    &   10    &  7    &       & 21    \\     \hline
 12   &   49  &    324  &  $(-p,q,r)      $ &  9    &   4     &  10   &       & 23    \\
 12   &   49  &    324  &  $K_p + CS_{q,r}$ &  5    &   9     &  9    &       & 23    \\   \hline
  12 &    57   &    396  & $(-p,q,r)      $ & 11   &   4     &    10   &    & 25    \\
  12 &      57 &    396 &$(p,-q,r)      $ &  4  &      6  &      10 &    & 20    \\    \hline

\end{tabular}
\end{tabular}
}
\end{table}
%
%\clearpage
%
\begin{table}%[h!]
{\scriptsize
\begin{tabular}{ll}
\begin{tabular}{|rrr|c|rrrr|r|}
\hline
$b$  &     $c$ &$d$      & Type             &  $p$ &$q$&$r$&$s$&$n$ \\ \hline
  12 &      58 &      360 &$(-p,q,r)      $ &  9  &      5  &      9  &    &  23   \\
  12 &      58 &      360 &$K_p + CS_{q,r}$ &  5  &      9  &      10 &    &  24   \\    \hline
  13 &      -7 &      165 &$(p,q,r)       $ &  6  &      4  &      6  &    &  16   \\
  13 &      -7 &      165 &$K_p + CS_{q,r}$ &  12 &      3  &      5  &    &  20   \\     \hline
  13 &      26 &      96  &$(-p,q,r)      $ &  2  &      6  &      9  &    &  17   \\
  13 &      26 &      96  &$K_p + CS_{q,r}$ &  3  &      12 &      4  &    &  19   \\   \hline
  13 &      54 &      360 &$(-p,q,r)      $ &  8  &      5  &      10 &    &  23   \\
  13 &      54 &      360 &$K_p + CS_{q,r}$ &  5  &      10 &      9  &    &  24   \\   \hline
%  13 &      105&      91  &$(-p,-q,r)     $ &  1  &      7  &      14 &    &  22   \\
%  13 &      105&      91  &$(-p,q,r)      $ &  7  &      13 &      2  &    &  22   \\   \hline
  14 &      -17&      176 &$(p,q,r)       $ &  6  &      3  &      8  &    &  17   \\
  14 &      -17&      176 &$K_p + CS_{q,r}$ &  12 &      4  &      4  &    &  20   \\   \hline
  14 &      39 &      216 &$(-p,q,r)      $ &  4  &      6  &      10 &    &  20   \\
  14 &      39 &      216 &$K_p + CS_{q,r}$ &  4  &      12 &      6  &    &  22   \\    \hline
  15 &      32 &      192 &$(-p,q,r)      $ &  4  &      4  &      13 &    &  21   \\
  15 &      32 &      192 &$(-p,q,r,-s)   $ &  2  &      4  &      12 & 2  & 20    \\    \hline
  15 &      52  &     252 &$(-p,q,r)      $ &      4  &      9   &     8   &   &  21 \\
  15 &      52  &     252 &$(1,p,-q,r,1)  $ &      2  &      4   &     14  &   &  22 \\     \hline
%  15 &      136 &     120 &$(-p,-q,r)     $ &      1  &      8   &     16  &   &     25    \\
%  15 &      136 &     120 &$(-p,q,r)      $ &      8  &      15  &     2   &   &     25    \\    \hline
  16 &      -37 &     198 &$(p,q,r)       $ &      8  &      2   &     9   &   &     19    \\
  16 &      -37 &     198 &$K_p + CS_{q,r}$ &      12 &      6   &     3   &   &     21    \\    \hline
  16 &      -28 &     288 &$(p,-q,r)      $ &      9  &      2   &     9   &   &     20    \\
  16 &      -28 &     288 &$K_p + CS_{q,r}$ &      9  &      9   &     4   &   &     22    \\     \hline
\end{tabular}
&
\begin{tabular}{|rrr|c|rrrr|r|}
\hline
$b$  &$c$       &$d$      &Type             & $p$ &$q$&$r$&$s$&$n$ \\ \hline
  16 &      -19 &     252 &$(p,-q,r)      $ &      6  &      2   &     12  &   &     20    \\
  16 &      -19 &     252 &$(p,q,r)       $ &      6  &      4   &     9   &   &     19    \\   \hline
  16 &      -10 &     252 &$(p,q,r)       $ &      5  &      5   &     9   &   &     19    \\
  16 &      -10 &     252 &$K_p + CS_{q,r}$ &      15 &      3   &     6   &   &     24    \\   \hline
  16 &      31  &     210 &$(-p,q,r)      $ &      7  &      2   &     16  &   &     25    \\
  16 &      31  &     210 &$K_p + CS_{q,r}$ &      4  &      14  &     5   &   &     23    \\   \hline
  16 &      38  &     210 &$(-p,q,r)      $ &      3  &      7   &     11  &   &     21    \\
  16 &      38  &     210 &$(1,p,-q,r,1)  $ &      2  &      3   &     15  &   &     22    \\   \hline
  16 &      72  &     330 &$(-p,q,r)      $ &      5  &      11  &     7   &   &     23    \\
  16 &      72  &     330 &$(1,p,-q,r,1)  $ &    2  &      5   &     15  &      &     24   \\    \hline
  17 &      33  &     225 &$(-p,q,r)      $ &    5  &      3   &     16  &      &     24   \\
  17 &      33  &     225 &$K_p + CS_{q,r}$ &    4  &      15  &     5   &      &     24   \\    \hline
  17 &      36  &     270 &$(-p,q,r)      $ &    6  &      3   &     16  &      &     25   \\
  17 &      36  &     270 &$(-p,q,-r,s)   $ &    3  &      15  &     2   &  3   &     23   \\     \hline
  18 &      -17 &     420 &$(-p,q,-r,s)   $ &    3  &      5   &     2   &  14  &     24    \\
  18 &      -17 &     420 &$K_p + CS_{q,r}$ &    8  &      12  &     5   &      &     25    \\    \hline
  18 &      35  &     240 &$(-p,q,r)      $ &    4  &      4   &     16  &      &     24    \\
  18 &      35  &     240 &$K_p + CS_{q,r}$ &    4  &      16  &     5   &      &     25    \\    \hline
  19 &      40  &     300 &$(-p,q,r)      $ &    4  &      5   &     16  &      &     25    \\
  19 &      40  &     300 &$(-p,q,r,-s)     $ &    2  &      5   &     15  &  2   &     24    \\     \hline
\end{tabular}
\end{tabular}
}
\end{table}
~\\
~


\begin{thebibliography}{99}

\bibitem {BH}
A.E. Brouwer and W.H. Haemers, Spectra of Graphs, Springer, 2012.

\bibitem{CH}
M. C\'{a}mara and W.H. Haemers,
Spectral characterizations of almost complete graphs,
Discrete Appl. Math. 176 (2014), 19--23.

\bibitem{CFMR}
D.M. Cardoso, M.A.A. Freitas, E.A. Martins, M. Robbiano,
Spectra of graphs obtained by a generalization of the join graph operation,
Discrete Math. 313 (2013): 733-741.

\bibitem{CRS}
Drago\u{s} Cvetkovi\'c, Peter Rowlinson, Slobodan Simi\'c,
An Introduction to the Theory of Graph Spectra,
London Mathematical Society Student Texts, 2009.

\bibitem{H}
W.H. Haemers,
Spectral characterization of mixed extensions of small graphs,
Discrete Math. (to appear).
%also: https://arxiv.org/abs/1712.01749.

\bibitem{S}
A.J. Schwenk, Computing the characteristic polynoimial of a graph, in:
R. Bary, F. Harary(Eds.), Graphs Combinatorics, in: Lecture Notes in
Mathematics, vol. 406, Springer, Berlin (1974): 153--172.

\bibitem{TSH}
H. Topcu, S. Sorgun, W.H. Haemers,
On the spectral characterization of pineapple graphs,
Linear Algebra and its Applications 507 (2016), 267–-273.

\bibitem{TSH'}
H. Topcu, S. Sorgun, W. H. Haemers,
The graphs cospectral with the pineapple graph,
Discrete Applied Mathematics (to appear).

\end{thebibliography}
\end{document}